\documentclass[12pt,twoside]{amsart}
\usepackage{amsfonts}
\usepackage{amsmath}
\usepackage{amssymb}
\usepackage{mathrsfs}

\newtheorem{theo}{Theorem}
\newtheorem{cor}{Corollary}
\newtheorem{lem}{Lemma}

\theoremstyle{definition}

\theoremstyle{remark}

\numberwithin{equation}{section}

\def\1{{\mathchoice {\rm 1\mskip-4mu l} {\rm 1\mskip-4mu l}{\rm 1\mskip-4.5mu l} {\rm 1\mskip-5mu l}}}
\newcommand{\ds}{\displaystyle}

\usepackage[width=16.00cm, height=21.00cm, left=2.00cm, right=2.00cm, top=4cm, bottom=2cm]{geometry}

\title[Spectral properties of commutators  on harmonic Bergman spaces]{Spectral properties of commutators  on harmonic Bergman spaces of the unit disk}

\author[K. Chbichib and N. Ghiloufi]{Khaled Chbichib and Noureddine Ghiloufi}
\email{c.khaled@yahoo.fr, noureddine.ghiloufi@fsg.rnu.tn}
\address{University of Gabes\\ Faculty of Sciences of Gabes\\  Laboratory of Mathematics and Applications (LR17ES11)\\ 6072, Gabes, Tunisia.\vskip0.1cm
ORCID iD: https://orcid.org/0009-0003-7604-2143\vskip0.1cm
ORCID iD: https://orcid.org/0000-0001-8061-7702}

\subjclass[2010]{47G10, 47A75, 30H20}
\keywords{Harmonic Bergman spaces, Commutant operators, eigenvalues.}
\begin{document}

\begin{abstract}
In this paper, we determine the asymptotic behavior of the singular values of the commutator $\mathscr{C}_u := [M_u, \mathbb{P}_\alpha]$ acting on $L^2(\mathbb D, dA_\alpha)$, where $M_u$ is the operator of multiplication by a subharmonic function $u$ on $\mathbb D(R)$ and harmonic outside the origin, with $R>1$, and $\mathbb{P}_\alpha$ is the orthogonal projection onto the space $\mathscr{H}_\alpha^2(\mathbb D)$ of harmonic functions on $\mathbb D$ that are square-integrable with respect to the weighted measure $dA_\alpha$. We prove that if $u(z) = U(z) + \overline{U(z)} + \nu_u \log|z|^2$, where $U$ is a holomorphic function on $\mathbb D(R)$ and $2\nu_u$ is the Lelong number of $u$ at $0$, then
$$s_n(\mathscr{C}_u) \underset{n\to+\infty}{\sim} \frac{\sqrt{\alpha+1}}{2\pi n} \int_{\partial \mathbb D} \sqrt{\nu_u^2 + |U'(z)|^2} \; |dz|.$$
In particular, the operator $\mathscr{C}_u$ belongs to the Von Newman-Schatten class $\mathcal C_p$ for any $p>1$.
\end{abstract}
\maketitle
\section{Introduction}
Let $\mathcal H$ be a Hilbert space and let $A$ and $B$ be two linear operators acting on $\mathcal H$. 
They are said to commute if $[A,B]:=AB-BA=0$. In this situation, the operators share important structural properties and can often be analyzed simultaneously. For instance, when $A$ and $B$ are diagonalizable operators, their commutativity ensures the existence of a common orthonormal basis of eigenvectors, with respect to which both operators are diagonal. \\
In contrast, when $[A,B]\neq 0$, such a simultaneous diagonalization property generally fails, and the commutator measures the extent to which the operators do not commute.\\

The study of commutators of operators plays a central role in several areas of mathematics and physics. 
A classical example arises in quantum mechanics, where the position and momentum operators satisfy the canonical commutation relation
$[A,B]=-\frac{i\hbar}{2\pi}I$, where $I$ denotes the identity operator and $\hbar$ is the reduced Planck constant. This fundamental identity leads to the well-known Heisenberg uncertainty principle.\\

Motivated by such relations, considerable attention has been devoted to the study of operator equations involving commutators. 
In particular, one is often interested in solving equations of the form $[A,X]=C,$ for given operators $A$ and $C$, or more generally the Sylvester-type equation $AX-XB=C,$ for prescribed operators $A$, $B$, and $C$. 
These equations arise naturally in many contexts of operator theory and potential theory and have been extensively investigated (see, for example, \cite{Me-Ma} and the references therein).\\

To introduce our statement, let $\alpha>-1$ and $dA_\alpha(z)=(\alpha+1)(1-|z|^2)^\alpha dA(z)$ be the probability measure on the unit disk $\mathbb D$ of $\mathbb C$ where $dA(z)=\frac{1}{\pi}dxdy=\frac{1}{\pi}rdrd\theta$ for $z=x+iy=re^{i\theta}\in\mathbb D$. This measure has many specific properties related to harmonic analysis on the unit disk (see \cite{HKZ} for more details). Let $\mathscr H_\alpha^2(\mathbb D)$ be the space of harmonic functions on $\mathbb D$ that are square-integrable with respect to the measure $dA_\alpha$. It is well known that $\mathscr H_\alpha^2(\mathbb D)$ is a sub-Hilbert space of $L^2(\mathbb D,dA_\alpha)$ and its reproducing kernel is given by $$K_\alpha(z,w)=\frac{1}{(1-z\overline{w})^{\alpha+2}}+\frac{1}{(1-w\overline{z})^{\alpha+2}}-1.$$
Hence the orthogonal projection $\mathbb P_\alpha$ from $L^2(\mathbb D,dA_\alpha)$ onto $\mathscr H_\alpha^2(\mathbb D)$ is given by
$$\mathbb P_\alpha f(z)=\langle f,K_\alpha(.,z)\rangle=\int_{\mathbb D}f(w)K_\alpha(z,w)dA_\alpha(w).$$

In \cite{Wu}, Wu considered the commutator $\mathscr C_u := M_u \circ \mathbb P_\alpha - \mathbb P_\alpha \circ M_u$ and the Hankel operator $H_u := (\mathbb I - \mathbb P_\alpha)\circ M_u$ associated with a harmonic function $u$ on $\mathbb D$ where $M_u$ denotes the multiplication operator by $u$. He proved that $\mathscr C_u$ (respectively, $H_u$) is compact on $L^2(\mathbb D, dA_\alpha)$ if and only if $|\nabla u(z)|(1-|z|^2)\longrightarrow 0$ as  $|z|\to 1^-.$
Moreover, he showed that the problem can be reduced to the case where $u$ is real-valued.\\
Later, in 2004, Dostani\'c focused on the analytic case and proved in \cite{Do} that the singular values of $\mathscr C_u$, acting on the Bergman space $\mathcal A^2(\mathbb D)$ of holomorphic functions that are square-integrable with respect to the Lebesgue measure on $\mathbb D$, satisfy the asymptotic estimate
$$s_n(\mathscr C_u)\underset{n\to+\infty}{\sim}\frac{1}{2\pi n}\int_{\partial \mathbb D}|u'(z)|\, |dz|,$$
whenever $u$ is holomorphic in a neighborhood of $\overline{\mathbb D}$.\\

In this paper, we consider a subharmonic function $u$ defined in a neighborhood $\mathbb D(R)$  of $\overline{\mathbb D}$ that is harmonic outside the origin. Since $u$ is bounded from above near $0$, it is well known that $u$ admits the representation $$u(z) = u_0(z) + \nu_u \log |z|^2,$$
where $u_0$ is harmonic in $\mathbb D(R)$ and $\nu_u$ denotes, up to a constant, the Lelong number of $u$ at $0$. Consequently, $u$ can be written as
$$
u(z) = U(z) + \overline{U(z)} + \nu_u \log |z|^2,
$$
where $U$ is a holomorphic function on $\mathbb D(R)$ (see \cite{Ra} for further background on harmonic and subharmonic functions).\\

Building on this representation, we extend Wu's analysis by establishing the asymptotic behavior of the singular values of the commutator $\mathscr{C}_u$. Indeed, Wu's results concern the case where $u$ is harmonic on $\mathbb D(R)$; that is, $\nu_u = 0$. This provides an answer to a question raised by Dostani\'c at the end of his paper~\cite{Do}. Our main result is stated below.
\begin{theo}
  Let $u$ be a subharmonic function on a neighborhood of $\overline{\mathbb D}$ with decomposition $u(z) = U(z) + \overline{U(z)} + \nu_u \log |z|^2$. Then the operator $\mathscr C_u$ is compact on $L^2(\mathbb D, dA_\alpha)$ and its singular values satisfy the following asymptotic estimate:
  $$s_n(\mathscr C_u)\underset{n\to+\infty}{\sim} \frac{\sqrt{\alpha+1}}{2\pi n}\int_{\partial \mathbb D}\sqrt{\nu_u^2+|U'(z)|^2}\;|dz|.$$
\end{theo}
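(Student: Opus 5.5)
Write $\mathbb P:=\mathbb P_\alpha$. The plan is to reduce $\mathscr C_u$ to a sum of operators with explicit structure whose singular values can be computed or estimated, and then invoke Ky Fan--type perturbation results. We use
$$\mathscr C_u = M_u\mathbb P-\mathbb P M_u = (I-\mathbb P)M_u\mathbb P-\mathbb P M_u(I-\mathbb P)=H_u-H_{\bar u}^*.$$
Here $H_u=(I-\mathbb P)M_u\mathbb P$ is the Hankel operator restricted to $\mathscr H^2_\alpha$, and the two pieces have orthogonal ranges and orthogonal initial spaces. Hence
$$\mathscr C_u^*\mathscr C_u = H_u^*H_u\oplus H_{\bar u}H_{\bar u}^*$$
is block diagonal. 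Since $u$ is real, $\bar u=u$, so the singular values of $\mathscr C_u$ are the union of those of $H_u$ counted twice. The target is therefore $s_n(H_u)\sim \frac{\sqrt{\alpha+1}}{\pi n}\int_{\partial\mathbb D}\sqrt{\nu_u^2+|U'|^2}\,|dz|$, and interleaving the two copies halves the constant.

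Next we split $u=u_0+\nu_u\log|z|^2$. The operator $H_{u_0}$ is handled essentially as in Dostani\'c. Acting on $f=h+\bar g$ with $h,g$ holomorphic, the product $(U+\bar U)(h+\bar g)$ differs from a harmonic function only by the terms $U\bar g+\bar U h$. So $H_{u_0}$ decomposes into "mixed" Hankel pieces $(I-\mathbb P)(U\bar g)$ and $(I-\mathbb P)(\bar U h)$. Each of these behaves like the analytic Bergman Hankel operator with symbol $\bar U$, which has singular values $\sim \frac{c}{n}\int|U'|$, up to trace-class errors because $U$ extends past $\partial\mathbb D$.

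For $\log|z|^2$, the operator is radial, so $H_{\log|z|^2}$ is diagonal in the basis $z^k,\bar z^k$. We compute $(I-\mathbb P)(z^k\log|z|^2)$ explicitly: its norm squared equals $\|z^k\log|z|^2\|^2-|\langle z^k\log|z|^2,z^k\rangle|^2/\|z^k\|^2$. Using Beta-integrals in $r$ this gives $s\approx \sqrt{\alpha+1}/k$, and the ordering of both families yields the contribution with $\nu_u$.

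The key structural point is that, after a Fourier decomposition in $\theta$ and modulo compact operators of faster singular-value decay (of class $\mathcal C_p$ with $p<1$ or $o(1/n)$), the relevant operator is unitarily close to a pseudodifferential/Toeplitz-type model on $\partial\mathbb D$. Concretely, we map $H_u^*H_u$ through the basis $e_k=z^k/\|z^k\|$ and show
$$H_u^*H_u \approx T_{\nu_u^2+|U'|^2}\,D^{-2}(\alpha+1)$$
up to lower-order terms, where $D$ is the diagonal operator with entries $k$. The symbol is $\nu_u^2+|U'|^2$ and not $(\nu_u+|U'|)^2$ because the cross terms between $\log$ and $U$ contribute $\mathrm{Re}(\nu_u U')$-type products. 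These have zero mean in the relevant Szeg\H{o} sense (they are oscillatory, involving $z\,U'(z)$ paired with $\bar z$) and thus fall into the lower-order part. The asymptotics of $T_\phi D^{-2}$ then follow from a Weyl/Birman--Solomyak law, $s_n(T_\phi^{1/2}D^{-1})\sim \frac{1}{2\pi n}\int\sqrt{\phi}\,|dz|$ as in \cite{Do}.

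The main obstacle is this step: rigorously showing that the cross terms between $H_{\log|z|^2}$ and $H_{U+\bar U}$ are negligible. This amounts to controlling $H_{\log}^*H_{U}$ modulo operators with $s_n=o(1/n)$. I would first try to expand $U=\sum a_mz^m$ and show that $H_{\log}^*H_{z^m}$ is a weighted shift by $m$ with weights $O(k^{-2})$ after normalization; then summability of $m|a_m|$ (valid since $R>1$) gives an operator in $\mathcal C_p$ for $p<1$ where only diagonal ($m=0$) terms survive, and the $m=0$ term is constant and does not enter $H_U$. Finally, the Ky Fan lemma combines the pieces to give the stated asymptotic. Compactness and $\mathcal C_p$ membership for $p>1$ are then immediate.
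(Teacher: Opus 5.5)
\textbf{Gap 1: multiplicity.} Your identity $\mathscr C_u=H_u-H_u^*$, with orthogonal initial and final spaces, is correct. The conclusion you draw from it is backwards, though. If every singular value of $H_u$ occurs twice in $\mathscr C_u$, the counting function doubles, so $s_n(H_u)\sim C/n$ gives $s_n(\mathscr C_u)\sim 2C/n$, not $C/(2n)$.

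Done correctly, your own computations then contradict the statement, with singular values counted with multiplicity as in Section~2. Take $u=\nu\log|z|^2$. Then $H_u$ is diagonal in $\{e_k,\overline{e_k}\}$, with $\|H_ue_k\|^2=\|H_u\overline{e_k}\|^2=\nu^2\bigl(\Psi'(k+1)-\Psi'(k+\alpha+2)\bigr)\sim\nu^2(\alpha+1)/k^2$; this is the variance of $\log t$ under the Beta$(k+1,\alpha+1)$ law. Hence $s_n(H_u)\sim 2\nu\sqrt{\alpha+1}/n$, which is exactly your target for $H_u$. It follows that $s_n(\mathscr C_u)\sim 4\nu\sqrt{\alpha+1}/n$, four times the value $\nu\sqrt{\alpha+1}/n$ of the stated formula.

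The same happens for $u=z+\bar z$ with $\alpha=0$. There $\mathscr C_u$ has singular values $((k+1)(k+2))^{-1/2}$, each with multiplicity four for $k\ge 2$, so $s_n(\mathscr C_u)\sim 4/n$ against the predicted $1/n$. No bookkeeping repair reaches the stated constant; it is too small by a factor $4$. The paper makes a related slip in Lemma~2: its final expansion exhibits four Schmidt families with the same coefficient $t_n$, yet it concludes $s_n(Y_{a,\nu})\sim t_n$.

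\textbf{Gap 2: the cross terms are not lower order.} For $U=z^m$, the weighted shift $H_{\log}^*H_{z^m}$ has weights $\sim m(\alpha+1)/k^2$, a covariance of $t^m$ and $\log t$ under a Beta law. That is the same order as the eigenvalues of $H_u^*H_u$ itself. Ky Fan needs $o(n^{-2})$ to discard a term. Membership in $\mathcal C_p$ for some $p<1$ is no smallness here, since $H_u^*H_u\in\mathcal C_p$ for all $p>1/2$. The zero mean of the cross term does not help either, because the Weyl constant $\frac1{2\pi}\int\sqrt{\sigma}$ is not linear in $\sigma$.

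Concretely, take $u=z+\bar z+\nu\log|z|^2$. The holomorphic block of $H_u^*H_u$ is a Jacobi matrix with diagonal entries $\sim(\alpha+1)(1+\nu^2)/k^2$ and off-diagonal entries $\sim(\alpha+1)\nu/k^2$, the latter coming from $\operatorname{Cov}(\log t,t)=(\alpha+1)/(k+\alpha+2)^2$. So the block is $(\alpha+1)D^{-1}(T_\sigma+K)D^{-1}$ with $K$ compact and $\sigma(\theta)=|1+\nu e^{i\theta}|^2$. Your Weyl law then gives the constant $\frac{\sqrt{\alpha+1}}{2\pi}\int_0^{2\pi}|1+\nu e^{i\theta}|\,d\theta$, which is strictly smaller than $\sqrt{(\alpha+1)(1+\nu^2)}$ for $\nu>0$. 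After localization the density is $|\nu_u+\zeta U'(\zeta)|=|\bar\partial(\overline U+\nu_u\log|z|^2)(\zeta)|$, not $\sqrt{\nu_u^2+|U'(\zeta)|^2}$.

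The paper's proof suppresses the same interaction. The $h_n$ of Lemma~2 are not orthonormal: $\langle h_n,h_{n+1}\rangle$ contains $\langle(\nu\log|z|^2-x_n)e_{n+1},\phi_{n+1}\rangle$, which after normalization does not tend to $0$ when $a\nu\neq0$. Also, the sector-by-sector unitary equivalence in Lemma~3 ignores that a rotation turns $a$ into $e^{i\theta}a$.

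Your framework (reduce to $H_u$, then to analytic Hankel operators and a Toeplitz-type Weyl law) is a reasonable alternative to the paper's kernel decomposition and model operator $Y_{a,\nu}$. Carried out correctly, however, it points to $\frac{2\sqrt{\alpha+1}}{\pi n}\int_{\partial\mathbb D}|\nu_u+\zeta U'(\zeta)|\,|d\zeta|$, not to the formula in the statement.
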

The proof of the main result is carried out in two steps and relies on several auxiliary lemmas. Section~2 is devoted to recalling some well-known facts concerning the singular values of compact operators, along with preliminary results needed in the sequel. In Section~3, we then decompose the operator $\mathscr{C}_u$ as a sum of simpler operators and complete the proof.\\ 
As an immediate corollary, we deduce that $\mathscr{C}_u$ belongs to the Von Newman-Schatten class $\mathcal C_p$ for any $p>1$. That is, 
$$\sum_{n=0}^{+\infty} \left(s_n(\mathscr{C}_u)\right)^p<+\infty,\quad\forall\; p>1.$$
\section{Preliminary results}
We start by recalling several results related to compact operators, and then proceed to prove two preliminary lemmas that will be used throughout the paper.

\begin{itemize}
\item We first restrict ourselves to infinite-dimensional Hilbert spaces. Let $\mathcal H$ be an infinite-dimensional Hilbert space and let $T$ be a compact operator on $\mathcal H$. The sequence $(s_n(T))_{n}$ of eigenvalues of the operator
$$
|T|:=(T^*T)^{\frac12},
$$
arranged in non-increasing order, is called the sequence of singular values of $T$. Moreover, $T$ admits the following Schmidt decomposition:
$$
T=\sum_{n=1}^{+\infty}s_n(T)\langle \cdot, x_n\rangle y_n,
$$
where $(x_n)_n$ and $(y_n)_n$ are orthonormal sequences in $\mathcal H$. Furthermore, the adjoint operator $T^*$ is given by
$$
T^*=\sum_{n=1}^{+\infty}s_n(T)\langle \cdot, y_n\rangle x_n.
$$

\item Let $p\geq1$. We say that $T$ belongs to the von Neumann--Schatten class $\mathcal C_p$ if
$$
\sum_{n=1}^{+\infty}\bigl(s_n(T)\bigr)^p<+\infty.
$$
In this case, one defines the Schatten $p$-norm of $T$ by
$$
\|T\|_p:=\left(\sum_{n=1}^{+\infty}\bigl(s_n(T)\bigr)^p\right)^{\frac1p}.
$$
For further properties of the classes $\mathcal C_p$ and additional references, we refer, for instance, to the paper \cite{Me}.
\item (Birman--Solomyak, see \cite{BS}) Let $T$ be a compact operator on $L^2(\Omega,d\mu)$, where $\Omega$ is an open subset of $\mathbb R^d$, with integral kernel $\kappa(\cdot,\cdot)$:
$$Tf(t)=\int_\Omega \kappa(t,\theta)f(\theta)\,d\mu(\theta).$$
If the kernel $\kappa$ is smooth on $\overline{\Omega}\times\overline{\Omega}$, then there exists a constant $c>0$ such that
$$s_n(T)=O\big(e^{-c n^{1/d}}\big).$$
\item (Ky Fan-Weil inequalities, see \cite{GK}) Let $A$ and $B$ be compact operators on $\mathcal H$. Then
$$s_{n+m-1}(A+B)\leq s_n(A)+s_m(B), 
\qquad 
s_{n+m-1}(AB)\leq s_n(A)s_m(B).$$
In particular, if $B$ has finite rank $r$, then
$$s_{n+r}(A)\leq s_n(A+B)\leq s_{n-r}(A).$$
\item (Ky Fan asymptotic principle) If
$$\lim_{n\to+\infty} n^p s_n(A)=a
\quad \text{and} \quad
\lim_{n\to+\infty} n^p s_n(B)=0,$$
then
$$\lim_{n\to+\infty} n^p s_n(A+B)=a.$$
\item Let $A$ be a compact operator on $\mathcal H$, and let $(P_k)_k$ be a family of mutually orthogonal orthogonal-projections. Define
$$\widehat{A}=\sum_{k=1}^{+\infty} P_k A P_k.$$
Then $s_n(\widehat{A})=s_n(A)$ for all $n$ if and only if $A=\widehat{A}$.
\end{itemize}
Following this brief review of the singular values of compact operators, we proceed to study two operators that will be required in the proof of the main result. The first is the auxiliary operator $E$ defined as follows:
$$Ef(z)=\int_{\mathbb D}f(w)(z-w)^2K_\alpha(z,w)dA_\alpha(w),$$
while the second is $Q_0:L^2(\mathbb D,dA_\alpha)\longrightarrow L^2(\mathbb D,dA_\alpha)$ defined by
$$Q_0f(z)=\int_{\mathbb D}(z-w)f(w)K_\alpha(z,w)dA_\alpha(w).$$ 
\begin{lem}\label{l1}
    The singular values of $E$ satisfy
    $$s_n(E)\underset{n\to+\infty}{\sim} \frac{\sqrt{2(\alpha+1)(\alpha+4)}}{n^2}. $$
\end{lem}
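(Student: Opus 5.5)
The plan is to use the rotation invariance of $dA_\alpha$ and of $K_\alpha$ to block-diagonalize $E$ into rank-one pieces, and then compute their norms explicitly.

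\textbf{Decomposition into Fourier modes.} Since $K_\alpha(e^{it}z,e^{it}w)=K_\alpha(z,w)$ and $(e^{it}z-e^{it}w)^2=e^{2it}(z-w)^2$, the operator $E$ maps the Fourier subspace
$$L_k=\{g(|z|)e^{ik\theta}\}$$
into $L_{k+2}$.

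\textbf{Expansion of the kernel.} Write
$$K_\alpha(z,w)=\sum_{j\ge0}c_j(z\overline w)^j+\sum_{j\ge1}c_j(\overline z w)^j,\qquad c_j=\frac{\Gamma(j+\alpha+2)}{j!\,\Gamma(\alpha+2)},$$
and expand $(z-w)^2=z^2-2zw+w^2$.

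\textbf{Restriction to a fixed mode.} For $f=g(|w|)\overline{w}^{\,k}$-type data with $k\ge0$, i.e. integrating against $\overline w^{\,k}$, only three terms of the holomorphic part survive, namely $j=k,\,k+1,\,k+2$. On that subspace $E$ acts through the rank-one kernel
$$z^{k+2}\,\overline w^{\,k}\bigl(c_k-2c_{k+1}|w|^2+c_{k+2}|w|^4\bigr).$$
The antiholomorphic part gives the mirror-image rank-one kernels on the modes $k\le -1$, roughly with $z$ and $w$ interchanged and conjugated. Only finitely many small modes mix the two parts, and there the restriction of $E$ has finite rank.

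\textbf{Singular values of the blocks.} By the Ky Fan--Weil inequality for finite-rank perturbations, these low modes do not affect the asymptotics. Hence, up to a finite-rank perturbation, $E$ is an orthogonal sum of rank-one operators $A_k\oplus A_k'$. Their singular values are
$$\sigma_k=\|z^{k+2}\|_{L^2(dA_\alpha)}\,\bigl\|w^k\bigl(c_k-2c_{k+1}|w|^2+c_{k+2}|w|^4\bigr)\bigr\|_{L^2(dA_\alpha)},$$
together with the analogous $\sigma_k'$ for the negative side.

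\textbf{Computing the norms.} Both norms are explicit Beta integrals:
$$\|w^k|w|^{2m}\|^2=(\alpha+1)\,B(k+m+1,\alpha+1).$$
Squaring the trinomial therefore gives a finite combination of Gamma quotients. The coefficient trinomial is a second difference in $j$, which is where the decay comes from: $c_j\sim j^{\alpha+1}/\Gamma(\alpha+2)$, while on the bulk of the mass $|w|^2\approx 1-O(1/k)$. There is a delicate cancellation here, since the individual terms are of size $k^{\alpha+1}$ and the combination is much smaller. I plan to handle it exactly. Writing $t=|w|^2$, one has
$$c_k-2c_{k+1}t+c_{k+2}t^2=c_k(1-t)^2+(\text{lower-order terms in }1-t),$$
with the terms expressed through the ratios $c_{k+1}/c_k$ and $c_{k+2}/c_k$. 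Integrating term by term with the Beta formulas and keeping Gamma-ratio asymptotics to second order should then give $\sigma_k\sim C/k^2$, and $\sigma_k'\sim C/k^2$ with the same constant by symmetry.

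\textbf{Counting.} Since each $k$ contributes two singular values of size $\sim C/k^2$, the decreasing rearrangement satisfies $s_n(E)\sim C/(n/2)^2=4C/n^2$. The computation above must produce $4C=\sqrt{2(\alpha+1)(\alpha+4)}$.

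The main obstacle is the cancellation step: extracting the exact leading constant of the second-difference norm through careful second-order Gamma/Beta asymptotics. I would first verify the constant in the unweighted case $\alpha=0$, where the target is $\sqrt8$.
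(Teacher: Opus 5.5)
Your block decomposition is correct, and your multiplicity count is more careful than the paper's. The gap is the last step. The sentence ``the computation above must produce $4C=\sqrt{2(\alpha+1)(\alpha+4)}$'' is reverse-engineered from the target, and it is false.

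The cancellation you flag as the main obstacle can be done exactly, with no second-order Gamma asymptotics. Since $\int_{\mathbb D}|w|^{2j}\,dA_\alpha(w)=1/c_j$, expanding the square of the trinomial and integrating term by term gives, for the block with output $z^n$ ($n=k+2$),
\[
\sigma^2=\frac{c_n}{c_{n+2}}-4\frac{c_{n-1}}{c_{n+1}}+4\frac{c_{n-1}^2}{c_n^2}-\frac{c_{n-2}}{c_n}
=\frac{2(\alpha+1)\bigl((\alpha+4)n+\alpha^2+\alpha\bigr)}{(\alpha+n)(\alpha+n+1)^2(\alpha+n+2)(\alpha+n+3)} .
\]
This is exactly the paper's $b_n$. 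Hence $\sigma\sim\sqrt{2(\alpha+1)(\alpha+4)}/n^2$, so $C=\sqrt{2(\alpha+1)(\alpha+4)}$, not a quarter of it.

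Your proposed check at $\alpha=0$ shows this at once. There each block has $\sigma^2=8/((n+1)^2(n+2)(n+3))$, so $C=\sqrt 8$ and $4C=8\sqrt2$, not $\sqrt8$. You also correctly observed that the mirror blocks on the modes $\le -3$, namely $f\mapsto\langle f,\overline{w}^{\,n}\rangle\,\overline{z}^{\,n-2}\bigl(c_n|z|^4-2c_{n-1}|z|^2+c_{n-2}\bigr)$, repeat every value. Combining the two, your argument actually yields $s_n(E)\sim 4\sqrt{2(\alpha+1)(\alpha+4)}/n^2$.

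The paper reaches the stated constant only by an oversight. It exhibits the decomposition $\sum_n\sqrt{b_n}\langle f,\varphi_n\rangle e_n+\sum_n\sqrt{b_n}\langle f,\overline{e_n}\rangle\overline{\varphi_n}$, with inputs and outputs in disjoint Fourier modes. It then concludes $s_n(E)\sim\sqrt{b_n}$, ignoring that each $\sqrt{b_n}$ occurs twice — the very doubling you identified. So your method, carried out honestly, cannot prove the lemma as stated; it shows the constant is off by a factor of $4$. What does survive is that $s_n(E)$ is of exact order $n^{-2}$. That rate is all the paper uses later, in the estimate $s_n(S_u)=o(1/n)$.
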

\begin{proof}
    Using the formula
    $$K_\alpha(z,w)=1+\sum_{n=1}^{+\infty}\frac{(\alpha+2)_n}{n!}\left((z\overline{w})^n+(w\overline{z})^n\right)$$
    we obtain
    \begin{equation}\label{E}
    \begin{array}{ll}
    Ef(z)&=\ds\int_{\mathbb D}f(w)(z-w)^2dA_\alpha(w)+\sum_{n=1}^{+\infty}\frac{(\alpha+2)_n}{n!}\int_{\mathbb D}f(w)(z-w)^2\left((z\overline{w})^n+(w\overline{z})^n\right)dA_\alpha(w)\\
    &\ds=z^2\int_{\mathbb D}f(w)dA_\alpha(w)-2z\int_{\mathbb D}wf(w)dA_\alpha(w)+\int_{\mathbb D}f(w)dA_\alpha(w)+\sum_{j=1}^6I_jf(z)
    \end{array}
    \end{equation}
    where 
    \begin{equation}\label{E1}
    \begin{array}{ll}
    I_1f(z)&\ds=\sum_{n=1}^{+\infty}\frac{(\alpha+2)_n}{n!}z^{n+2}\int_{\mathbb D}f(w)(\overline{w})^ndA_\alpha(w)\\
    &\ds=\sum_{n=3}^{+\infty}\frac{(\alpha+2)_{n-2}}{(n-2)!}\langle f,w^{n-2}\rangle z^n,
    \end{array}
    \end{equation}
    \begin{equation}\label{E2}
    \begin{array}{ll}
    I_2f(z)&\ds=-2\sum_{n=1}^{+\infty}\frac{(\alpha+2)_n}{n!}z^{n+1}\int_{\mathbb D}wf(w)(\overline{w})^ndA_\alpha(w)\\
    &\ds=-2\sum_{n=2}^{+\infty}\frac{(\alpha+2)_{n-1}}{(n-1)!}\langle f,|w|^2w^{n-2}\rangle z^n,
    \end{array}
    \end{equation}
    \begin{equation}\label{E3}
    \begin{array}{ll}
    I_3f(z)&=\ds\sum_{n=1}^{+\infty}\frac{(\alpha+2)_n}{n!}z^{n}\int_{\mathbb D}w^2f(w)(\overline{w})^ndA_\alpha(w)\\
    &\ds=(\alpha+2)\langle f,|w|^2\overline{w}\rangle z+\sum_{n=2}^{+\infty}\frac{(\alpha+2)_n}{n!}\langle f,|w|^4w^{n-2}\rangle z^n,
    \end{array}
    \end{equation}
    \begin{equation}\label{E4}
    \begin{array}{ll}
    I_4f(z)&=\ds\sum_{n=1}^{+\infty}\frac{(\alpha+2)_n}{n!}z^2(\overline{z})^{n}\int_{\mathbb D}f(w)w^ndA_\alpha(w)\\
    &\ds=(\alpha+2)\langle f,\overline{w}\rangle z|z|^2+\sum_{n=0}^{+\infty}\frac{(\alpha+2)_{n+2}}{(n+2)!}\langle f,(\overline{w})^{n+2}\rangle |z|^4(\overline{z})^n,
    \end{array}
    \end{equation}
    \begin{equation}\label{E5}
    \begin{array}{ll}
    I_5f(z)&=-2\ds\sum_{n=1}^{+\infty}\frac{(\alpha+2)_n}{n!}z(\overline{z})^{n}\int_{\mathbb D}f(w)w^{n+1}dA_\alpha(w)\\
    &\ds=-2\sum_{n=0}^{+\infty}\frac{(\alpha+2)_{n+1}}{(n+1)!}\langle f,(\overline{w})^{n+2}\rangle |z|^2(\overline{z})^n
    \end{array}
    \end{equation}
    and 
    \begin{equation}\label{E6}
    \begin{array}{ll}
    I_6f(z)&=\ds\sum_{n=1}^{+\infty}\frac{(\alpha+2)_n}{n!}(\overline{z})^{n}\int_{\mathbb D}f(w)w^{n+2}dA_\alpha(w)\\
    &\ds=\sum_{n=1}^{+\infty}\frac{(\alpha+2)_n}{n!}\langle f,(\overline{w})^{n+2}\rangle (\overline{z})^n.
    \end{array}
    \end{equation}
    Combining Equations \eqref{E1}-\eqref{E6} with \eqref{E}, we obtain 
    \begin{equation}\label{E7}
    \begin{array}{lcl}
    Ef(z)&=&\ds \langle f,1-\overline{w}^2\rangle+\langle f,((\alpha+2)|w|^2-1)\overline{w}\rangle z+\langle f,\overline{w}\rangle ((\alpha+2)|z|^2-1)z\\
    &&\ds + \sum_{n=2}^{+\infty}\frac{(\alpha+2)_n}{n!}\left\langle f,\left(|w|^4-\frac{2n}{\alpha+n+1}|w|^2+\frac{n(n-1)}{(\alpha+n)(\alpha+n+1)}\right)w^{n-2}\right\rangle z^n\\
    &&\ds + \sum_{n=2}^{+\infty}\frac{(\alpha+2)_n}{(n+2)!}\langle f,(\overline{w})^n\rangle \left(|z|^4-\frac{2n}{\alpha+n+1}|z|^2+\frac{n(n-1)}{(\alpha+n)(\alpha+n+1)}\right)(\overline{z})^{n-2}.
    \end{array}
    \end{equation}
    If we set $$\varphi_n(z)=\sqrt{\frac{(\alpha+2)_n}{n!b_n}}\left(|z|^4-\frac{2n}{\alpha+n+1}|z|^2+\frac{n(n-1)}{(\alpha+n)(\alpha+n+1)}\right)z^{n-2},$$
    where 
    \begin{equation}\label{E8}
    \begin{array}{lcl}
      b_n&=&\ds\frac{(n+1)(n+2)}{(\alpha+n+2)(\alpha+n+3)}-4\frac{n(n+1)}{(\alpha+n+1)(\alpha+n+2)}+4\frac{n^2}{(\alpha+n+1)^2} -\frac{n(n-1)}{(\alpha+n)(\alpha+n+1)}\\
      &=&\ds \frac{2 (\alpha+1)((\alpha+4)n+\alpha^2+\alpha)}{(\alpha+n) (\alpha+n+1)^2 (\alpha+n+2) (\alpha+n+3)},
    \end{array}
    \end{equation}
    Then the sequences $(\varphi_n)_{n\geq2},\ (\overline{\varphi_n})_{n\geq2},\ (e_n)_{n\geq2}$ and $(\overline{e_n})_{n\geq2}$ are mutually orthonormal in $L^2(\mathbb D,dA_\alpha)$. Using these sequences, we can rewrite the formula \eqref{E7} as 
    $$\begin{array}{lcl}
        Ef(z)&=&\ds\langle f,1-\overline{w}^2\rangle+\langle f,((\alpha+2)|w|^2-1)\overline{w}\rangle z+\langle f,\overline{w}\rangle ((\alpha+2)|z|^2-1)z\\
         & &\ds+\sum_{n=2}^{+\infty}\sqrt{b_n}\langle f,\varphi_n\rangle e_n(z)+\sum_{n=2}^{+\infty}\sqrt{b_n}\langle f,\overline{e_n}\rangle \overline{\varphi_n(z)}.
      \end{array}$$
      It follows that the singular values of $E$ satisfy $s_n(E)\sim\sqrt{b_n}$ as $n\to+\infty$ and the lemma follows from the equivalence:
      $$b_n\underset{n\to+\infty}\sim \frac{2(\alpha+1)(\alpha+4)}{n^4}.$$
\end{proof}
    By duality, we deduce that the adjoint operator $E^*$ given by $$E^*f(z)=\int_{\mathbb D}f(w)(\overline{z}-\overline{w})^2K_\alpha(z,w)dA_\alpha(w)$$ satisfies the same estimate  $$s_n(E^*)\underset{n\to+\infty}{\sim} \frac{\sqrt{2(\alpha+1)(\alpha+4)}}{n^2}.$$
    For any $a\in\mathbb C$ and $\nu\geq0$, we consider the following operators: $\mathfrak{Q}_a=aQ_0+\overline{a}Q_0^*$, 
    $$\mathfrak{R}_\nu f(z)=\nu\int_{\mathbb D}(\log|z|^2-\log|w|^2)f(w)K_\alpha(z,w)dA_\alpha(w)\quad \text{and}\quad Y_{a,\nu}:=\mathfrak{Q}_a+\mathfrak{R}_\nu.$$
    Then, we have the following lemma:
\begin{lem}
    For any $a\in\mathbb C$ and $\nu\geq 0$, the singular values of the operator $Y_{a,\nu}$ satisfy the following asymptotic behavior: 
    $$s_n(Y_{a,\nu})\underset{n\to+\infty}{\sim}\frac{\sqrt{(\alpha+1)(\nu^2+|a|^2)}}{n}.$$
\end{lem}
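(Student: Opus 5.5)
The plan is to imitate the proof of Lemma \ref{l1}: expand $K_\alpha(z,w)=1+\sum_{n\ge1}\frac{(\alpha+2)_n}{n!}\big((z\overline w)^n+(w\overline z)^n\big)$ inside the kernels of $Q_0$, $Q_0^*$ and $\mathfrak R_\nu$, and sort the resulting rank-one pieces by angular frequency. The starting observation is that $K_\alpha$ is rotation invariant. Hence $Q_0$, whose kernel is multiplied by $z-w$, raises frequency by one. Its adjoint $Q_0^*$, with factor $\overline z-\overline w$, lowers it by one. The operator $\mathfrak R_\nu$, with the radial factor $\log|z|^2-\log|w|^2$, preserves frequency.

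The first step is to integrate out the angular variable. Writing $\langle f,g\rangle$ in $L^2(\mathbb D,dA_\alpha)$ and $e_n(z)=\|z^n\|^{-1}z^n$ as in Lemma \ref{l1}, I expect
\begin{itemize}
\item $Q_0 f=\sum_n \langle f,\psi_n\rangle e_n+\sum_n \langle f,\overline{e_n}\rangle\,\overline{\tilde\psi_n}+(\text{finite rank})$, with $\psi_n(w)=c_n\big(|w|^2-\tfrac{n}{\alpha+n+1}\big)w^{n-1}$ and the normalisation forced by $\|z^n\|^2=\frac{n!}{(\alpha+2)_n}$;
\item $\mathfrak R_\nu f=\nu\sum_n\langle f,\chi_n\rangle e_n+\nu\sum_n\langle f,e_n\rangle\,\chi_n+(\text{conjugate terms})$, with $\chi_n(w)=d_n\big(\log|w|^2-\lambda_n\big)w^n$. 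Here $\lambda_n=\|z^n\|^{-2}\int\log|w|^2|w|^{2n}dA_\alpha$ is chosen so that $\chi_n\perp e_n$.
\end{itemize}
The profiles $|w|^2-\frac{n}{\alpha+n+1}$ and $\log|w|^2-\lambda_n$ are exactly the radial profiles orthogonal to the monomial of the same frequency. A Beta-integral computation, analogous to the one producing $b_n$ in \eqref{E8}, should give $\|\psi_n\|^2\sim\frac{\alpha+1}{n^2}$. A similar computation with $\int_0^1 \log t\, t^{n}(1-t)^\alpha dt$, using digamma asymptotics, should give $\|\chi_n\|^2\sim\frac{\alpha+1}{n^2}$.

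The second step is to regroup $Y_{a,\nu}=aQ_0+\overline aQ_0^*+\mathfrak R_\nu$ block by block. I would organise the blocks by output frequency $n$ in the holomorphic direction, together with their conjugates. The $e_n$-component of $Y_{a,\nu}f$ is $\langle f,\overline a\,\psi_n+\nu\chi_n\rangle$. The remaining components are mutually orthogonal vectors of the same size, coming from the adjoint halves $\overline a Q_0^*$ and $\nu\langle f,e_n\rangle\chi_n$. Up to finite rank, this exhibits $Y_{a,\nu}$ as a direct sum of uniformly small finite-rank blocks, each of size $O(1/n)$, acting on mutually orthogonal subspaces. The singular values of $Y_{a,\nu}$ are then those of the blocks. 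Since the leading term in each block is rank one with norm $\|\overline a\psi_n+\nu\chi_n\|$, I expect
$$\|\overline a\psi_n+\nu\chi_n\|^2=|a|^2\|\psi_n\|^2+\nu^2\|\chi_n\|^2+2\nu\,\mathrm{Re}\,\big(a\langle\chi_n,\psi_n\rangle\big)\sim\frac{(\alpha+1)(|a|^2+\nu^2)}{n^2}.$$
This requires the cross term $\langle\chi_n,\psi_n\rangle=o(n^{-2})$. That seems plausible, since $\psi_n$ and $\chi_n$ live on the shifted frequencies $n-1$ and $n$ respectively once the index bookkeeping is done carefully, so they are exactly orthogonal. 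The remaining perturbations would be absorbed by the Ky Fan asymptotic principle together with the finite-rank Ky Fan–Weil inequality. Counting the blocks with their conjugate copies in the same way as in Lemma \ref{l1} then gives $s_n(Y_{a,\nu})\sim\sqrt{(\alpha+1)(\nu^2+|a|^2)}/n$.

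I expect the main obstacle to be the regrouping in the second step. Because $Q_0$ and $Q_0^*$ shift frequency in opposite directions, the pieces of $aQ_0$, $\overline aQ_0^*$ and $\mathfrak R_\nu$ attached to a given $e_n$ do not trivially sit on orthogonal input and output spaces. I would first verify exactly which input vectors pair with which output vectors at each frequency. I would then compute the small Gram matrix $B_n^*B_n$ of each block, check that it has one eigenvalue $\sim(\alpha+1)(|a|^2+\nu^2)/n^2$ up to the conjugate duplicate and all others $o(n^{-2})$, and finally control the off-diagonal interactions between neighbouring frequencies. If these interactions are not exactly zero, I would show they are $o(1/n)$ in singular values, and hence negligible by the Ky Fan asymptotic principle. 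The multiplicity bookkeeping between holomorphic and antiholomorphic blocks must be done carefully to get the constant right.
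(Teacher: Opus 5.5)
Your route is essentially the paper's: expand $K_\alpha$, pair each $e_n$ with a non-harmonic partner, and read the singular values off rank-one pieces. It breaks exactly at the step you flag as the main obstacle.

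\textbf{The neighbouring-frequency coupling is not small.} You are right that $\psi_n$ (frequency $n-1$) and $\chi_n$ (frequency $n$) are orthogonal, so your diagonal computation is fine. But the same shift puts $\chi_n$ and $\psi_{n+1}$ at the \emph{same} frequency $n$. Hence the rank-one pieces attached to $e_n$ and $e_{n+1}$ interfere: $\langle \overline a\psi_n+\nu\chi_n,\ \overline a\psi_{n+1}+\nu\chi_{n+1}\rangle=a\nu\langle\chi_n,\psi_{n+1}\rangle$. Take, up to signs, $\chi_n=(\log|w|^2-\lambda_n)e_n$ and $\psi_{n+1}=(I-\mathbb P_\alpha)(\overline w\,e_{n+1})$. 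Since $\chi_n\perp\mathscr H^2_\alpha(\mathbb D)$, one gets exactly
\[
\langle\chi_n,\psi_{n+1}\rangle=\frac{\|w^{n+1}\|}{\|w^n\|}\,(\lambda_{n+1}-\lambda_n)=\sqrt{\frac{n+1}{n+\alpha+2}}\;\frac{\alpha+1}{(n+1)(n+\alpha+2)}\sim\frac{\alpha+1}{n^2}.
\]
This is the same size as $\|\chi_n\|\,\|\psi_{n+1}\|$. In fact the normalized vectors become parallel, because $\log|w|^2\approx|w|^2-1$ near the circle. So the Gram matrix of your vectors $\overline a\psi_n+\nu\chi_n$ is genuinely tridiagonal. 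It is approximately $\frac{\alpha+1}{n^2}$ times a Toeplitz matrix with symbol $\bigl||a|+\nu e^{i\theta}\bigr|^2$, up to a rotation in $\theta$. The coupling therefore contributes at order $1/n$, not $o(1/n)$, and the Ky Fan principle cannot absorb it. When $a\nu\neq 0$ it changes the constant: a Szeg\H{o}/Birman--Solomyak type count for this slowly varying Jacobi matrix gives $\frac1{2\pi}\int_0^{2\pi}\bigl||a|+\nu e^{i\theta}\bigr|\,d\theta$ instead of $\sqrt{|a|^2+\nu^2}$. These agree only if $a\nu=0$. The paper's proof has the same hole: its $h_n$ and $h_{n+1}$ both have a component at frequency $n+1$ (through $e_{n+1}$ and $\phi_{n+1}$), so they are not orthonormal as claimed.

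\textbf{The multiplicity count is also off, even with no coupling.} Take $\nu=0$. Your decomposition (or the paper's) shows that $\mathfrak Q_a$ acts on $\mathrm{span}\{e_n,\psi_n\}$ and on its conjugate by antidiagonal $2\times 2$ blocks. Each block has the singular value $|a|\,\|\psi_n\|$ twice, so this value, which is $\sim|a|\sqrt{\alpha+1}/n$, occurs four times. Hence $s_n(\mathfrak Q_a)\sim 4|a|\sqrt{\alpha+1}/n$. Structurally, $Y_{a,\nu}=[M_v,\mathbb P_\alpha]$ with $v=az-\overline a\,\overline z+\nu\log|z|^2$. Its singular values are the union of those of $(I-\mathbb P_\alpha)M_v\mathbb P_\alpha$ and of $\mathbb P_\alpha M_v(I-\mathbb P_\alpha)$, and each of these splits into a holomorphic and an antiholomorphic half. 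Counting ``in the same way as in Lemma 1'' inherits the same slip, since there too each $\sqrt{b_n}$ occurs twice. So the formula as stated cannot be reached by tightening details. It is off by a factor $4$ already when $a\nu=0$. When $a\nu\neq0$, the neighbouring-frequency coupling has to be analysed as a Toeplitz-type operator rather than discarded.
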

\begin{proof}
    To prove the lemma, let us start with the study of $Q_0$. Using the same techniques as in the first step, it is straightforward to see that the operator $Q_0$ admits the following decomposition:
   $$\begin{array}{lcl}   
   Q_0f(z)&=&\ds -\sum_{n=0}^{+\infty}c_n\langle f,\phi_n\rangle e_{n+1}(z)+\sum_{n=0}^{+\infty}c_n\langle f,\overline{e_{n+1}}\rangle \overline{\phi_n(z)}\end{array}$$
   where $e_n$ is as above and 
   $$c_n=\sqrt{\frac{\alpha+1}{(\alpha+n+2)(\alpha+n+3)}},\quad \phi_n(w)=\sqrt{\frac{(\alpha+n+2)\Gamma(\alpha+n+4)}{(\alpha+1)\Gamma(\alpha+2)(n+1)!}}\left(|w|^2-\frac{n+1}{\alpha+n+2}\right)w^n$$
   for any $n\geq 0$.\\
   Let $a\in\mathbb C$ and $\nu\geq0$. Since the case $(a,\nu)=(0,0)$ is obvious, we may assume that $(a,\nu)\neq (0,0)$. Using the previous decomposition, we obtain 
   $$\begin{array}{lcl}   
   \mathfrak{Q}_af(z)&=&\ds -\sum_{n=0}^{+\infty}ac_n\langle f,\phi_n\rangle  e_{n+1}(z) +\sum_{n=0}^{+\infty}ac_n\langle f,\overline{e_{n+1}}\rangle \overline{\phi_n(z)}\\ 
   &&\ds -\sum_{n=0}^{+\infty}\overline{a}c_n\langle f, e_{n+1}\rangle \phi_n(z) +\sum_{n=0}^{+\infty}\overline{a}c_n\langle f,\overline{\phi_n}\rangle \overline{e_{n+1}(z)}.
   \end{array}$$
   It is easy to verify that the four sequences $(e_n)_{n\geq1},\ (\overline{e_n})_{n\geq1},\ (\phi_n)_{n\geq1}$ and $(\overline{\phi_n})_{n\geq1}$ are mutually orthonormal in $L^2(\mathbb D,dA_\alpha)$. \\
   
   For the second operator $\mathfrak{R}_\nu$, we have  
   $$\begin{array}{lcl}   
   \mathfrak{R}_\nu f(z)&=&\ds\nu\int_{\mathbb D}(\log|z|^2-\log|w|^2)f(w)K_\alpha(z,w)dA_\alpha(w)\\
   &=&\ds \sum_{n=0}^{+\infty}\langle f,e_n\rangle  \nu \log|z|^2e_n(z)+\sum_{n=0}^{+\infty}\langle f,\overline{e_n}\rangle \nu\log|z|^2\overline{e_n(z)}\\
   &&-\ds\sum_{n=0}^{+\infty}\langle f, \nu\log|w|^2e_n\rangle e_n(z) -\sum_{n=0}^{+\infty}\langle f,\nu\log|w|^2\overline{e_n}\rangle \overline{e_n(z)}.
   \end{array}$$
    Hence, by  combining the two previous equations, we find
    $$\begin{array}{l}   
   \ds Y_{a,\nu} f(z)=\langle f,1\rangle\nu\log|z|^2-\langle f,\nu\log|w|^2\rangle+\\
   \ds +\sum_{n=0}^{+\infty}\langle f, e_{n+1}\rangle\left(\nu\log|z|^2 e_{n+1}(z)-\overline{a}c_n \phi_n(z)\right) +\ds\sum_{n=0}^{+\infty}\langle f,\overline{e_{n+1}}\rangle\left(\nu\log|z|^2\overline{e_{n+1}(z)}-ac_n \overline{\phi_n(z)}\right)\\
   \ds-\sum_{n=0}^{+\infty}\langle f,\nu\log|w|^2e_{n+1}-\overline{a}c_n\phi_n\rangle  e_{n+1}(z)  -\sum_{n=0}^{+\infty}\langle f,\nu\log|w|^2\overline{e_{n+1}}-ac_n\overline{\phi_n}\rangle \overline{e_{n+1}(z)}.
   \end{array}$$
   It is easy to see that 
   $$x_n:=\langle \nu\log|z|^2 e_{n+1}-\overline{a}c_n \phi_n,e_{m+1}\rangle=\nu( \Psi'(n+1)-\Psi'(n+\alpha+2))\delta_{n,m},$$
where $\Psi$ denotes the digamma function,
$$\Psi(t)=\frac{\Gamma'(t)}{\Gamma(t)}
= -\gamma +\sum_{k=1}^{+\infty}\left(\frac{1}{k}-\frac{1}{k+t-1}\right),
\qquad t>0,$$
and $\gamma$ is the Euler--Mascheroni constant.\\
Hence, 
    $$\begin{array}{lcl}   
   \ds Y_{a,\nu} f(z)&=& \langle f,1\rangle\nu\log|z|^2-\langle f,\nu\log|w|^2\rangle\\
   &&\ds+\sum_{n=0}^{+\infty}\langle f, e_{n+1}\rangle\left((\nu\log|z|^2-x_n) e_{n+1}(z)-\overline{a}c_n \phi_n(z)\right) \\
   &&+\ds\sum_{n=0}^{+\infty}\langle f,\overline{e_{n+1}}\rangle\left((\nu\log|z|^2-x_n)\overline{e_{n+1}(z)}-ac_n \overline{\phi_n(z)}\right)\\
   &&\ds-\sum_{n=0}^{+\infty}\langle f,(\nu\log|w|^2-x_n)e_{n+1}-\overline{a}c_n\phi_n\rangle  e_{n+1}(z)\\
   &&\ds-\sum_{n=0}^{+\infty}\langle f,(\nu\log|w|^2-x_n)\overline{e_{n+1}}-ac_n\overline{\phi_n}\rangle \overline{e_{n+1}(z)}.
   \end{array}$$
    By putting $$t_n:=\sqrt{|a|^2c_n^2+\nu^2(\Psi'(n+2)-\Psi'(n+\alpha+3))^2}$$
    and 
    $$h_n(z):=\frac{1}{t_n}\left((\nu\log|z|^2-x_n)e_{n+1}(z)-\overline{a}c_n\phi_n(z)\right),$$
    we conclude that $(h_n)_{n\geq0},\ (e_{n+1})_{n\geq0},\ (\overline{h_n})_{n\geq0}$ and $(\overline{e_{n+1}})_{n\geq0}$ are  mutually orthonormal sequences in $L^2(\mathbb D,dA_\alpha)$ and 
    $$\begin{array}{lcl}   
   \ds Y_{a,\nu} f(z)&=&\ds \langle f,1\rangle\nu\log|z|^2-\langle f,\nu\log|w|^2\rangle+\sum_{n=0}^{+\infty}t_n\langle f, e_{n+1}\rangle h_n(z)\\
   &&\ds+\ds\sum_{n=0}^{+\infty}t_n\langle f,\overline{e_{n+1}}\rangle \overline{h_n(z)}-\sum_{n=0}^{+\infty}t_n\langle f,h_n\rangle  e_{n+1}(z)-\sum_{n=0}^{+\infty}t_n\langle f,\overline{h_n}\rangle \overline{e_{n+1}(z)}.
   \end{array}$$
   Hence, By Ky-Fan theorem, we deduce that $$s_n(Y_{a,\nu})\underset{n\to+\infty}{\sim}t_n\underset{n\to+\infty}{\sim}\frac{\sqrt{(\alpha+1)(\nu^2+|a|^2)}}{n}.$$   
\end{proof}   
 Now, for each $N\geq 1$, we split the unit disk into two parts: 
 $$D_0^N:=\{z\in\mathbb D;\ |z|<e^{-\frac{2\pi}{N}}\}, \qquad B_0^N:=\{z\in\mathbb D;\ e^{-\frac{2\pi}{N}}<|z|<1\}$$ and consider the corresponding multiplication operators $P_0:=M_{\1_{D_0^N}}$ and $P_1:=M_{\1_{B_0^N}}$ defined by multiplication by the characteristic functions of $D_0^N$ and $B_0^N$, respectively. Thus, the identity operator $I=P_0+P_1$ in $L^2(\mathbb D,dA_\alpha)$ and for any $a\in\mathbb C,\ \nu\geq0$, 
 $$Y_{a,\nu}=\left(P_0Y_{a,\nu}P_0+P_1Y_{a,\nu} P_0 +P_0Y_{a,\nu}P_1\right)+P_1Y_{a,\nu}P_1.$$ 
 Moreover, by Birman-Solomyak theorem, we obtain $$s_n(P_0Y_{a,\nu}P_0+P_1Y_{a,\nu}P_0+P_0Y_{a,\nu}P_1)=O(e^{-d_0\sqrt{n}})$$ as $n\to+\infty$ for some positive constant $d_0>0$. Hence, by Ky-Fan theorem and the previous lemma, we deduce that when $n\to+\infty$, $$s_n(P_1Y_{a,\nu}P_1)\sim s_n(Y_{a,\nu})\sim \frac{\sqrt{(\alpha+1)(\nu^2+|a|^2)}}{n}.$$
 To exploit this result in the proof of the main theorem, we decompose $P_1$ as a sum of $N$ operators $M_j$ defined by $M_j=M_{\1_{D_j^N}}$ where 
 $$D_j^N:=\left\{z\in\mathbb D;\ e^{-\frac{2\pi}{N}}<|z|<1,\ \frac{2\pi(j-1)}{N}<arg(z)<\frac{2\pi j}{N}\right\},\quad 1\leq j\leq N.$$
 We have the following lemma.
 \begin{lem}
     Let $a\in\mathbb C,\ \nu\geq0$ and $N\in\mathbb N$ be a positive integer. Then for every $1\leq j\leq N$, the singular values of the operator $M_jY_{a,\nu} M_j$ satisfy  the following asymptotic formula:
     $$s_n(M_jY_{a,\nu}M_j)\sim\frac{\sqrt{(\alpha+1)(\nu^2+|a|^2)}}{nN}.$$
 \end{lem}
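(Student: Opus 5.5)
The plan is to exploit rotation invariance and the off-diagonal decay of the kernel of $Y_{a,\nu}$. The pieces $M_jY_{a,\nu}M_j$ should all have the same singular values, and together they should reproduce the asymptotics of $P_1Y_{a,\nu}P_1$ already established above.

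\textbf{Step 1: all pieces are unitarily equivalent.} Let $V_\theta f(z)=f(e^{-i\theta}z)$. This is unitary on $L^2(\mathbb D,dA_\alpha)$, since $dA_\alpha$ is radial. It also satisfies $K_\alpha(e^{i\theta}z,e^{i\theta}w)=K_\alpha(z,w)$, and $\log|z|^2-\log|w|^2$ is rotation invariant. For the $\mathfrak Q_a$ part, however, the factor $(z-w)$ picks up $e^{i\theta}$ under rotation, so
$$V_\theta^{*}\mathfrak Q_aV_\theta=\mathfrak Q_{ae^{i\theta}}.$$
Therefore $V_\theta^{*}Y_{a,\nu}V_\theta=Y_{ae^{i\theta},\nu}$. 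Taking $\theta=2\pi(j-1)/N$, the rotation carries $D_1^N$ onto $D_j^N$, and we get
$$s_n(M_jY_{a,\nu}M_j)=s_n(M_1Y_{ae^{i\theta},\nu}M_1).$$
Since the target asymptotic depends only on $|a|$ and $\nu$, it suffices to show that the quantity
$$\sigma_n:=\lim n\, s_n(M_jY_{a,\nu}M_j)$$
exists, is the same for every $j$, and equals $\sqrt{(\alpha+1)(\nu^2+|a|^2)}/N$.

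\textbf{Step 2: split $P_1Y_{a,\nu}P_1$ into diagonal and off-diagonal parts.} Write
$$P_1Y_{a,\nu}P_1=\sum_jM_jY_{a,\nu}M_j+\sum_{j\ne k}M_jY_{a,\nu}M_k.$$
The diagonal part is an orthogonal direct sum. Its counting function $n(\varepsilon)=\#\{n: s_n>\varepsilon\}$ is therefore the sum of the counting functions of the $N$ summands. For the off-diagonal blocks, I would first argue that the terms with $D_j^N,D_k^N$ non-adjacent have kernel smooth up to the boundary. This holds because $K_\alpha(z,w)$ is singular only at $z=w\in\partial\mathbb D$, and the factors $(z-w)$ and $\log|z|^2-\log|w|^2$ are smooth on the annulus. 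Birman--Solomyak then gives exponentially small singular values for these terms. Adjacent blocks meet only along a radial segment hitting $\partial\mathbb D$ at one point. For these I would show $s_n=o(1/n)$, for instance by a further dyadic localization near the common boundary point, or by comparing with the analogous half-plane model, where $M_{\{\arg>0\}}YM_{\{\arg<0\}}$ behaves like a Hankel-type operator with fast-decaying singular values. Ky Fan's asymptotic principle then gives
$$s_n\Big(\bigoplus_j M_jY_{a,\nu}M_j\Big)\sim \frac{\sqrt{(\alpha+1)(\nu^2+|a|^2)}}{n}.$$

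\textbf{Step 3: identify the individual asymptotics.} Suppose each block satisfies $s_n\sim c_j/n$. Counting functions of a direct sum add, and a block with $s_n\sim c/n$ has $n(\varepsilon)\sim c/\varepsilon$. Hence $\sum_j c_j=\sqrt{(\alpha+1)(\nu^2+|a|^2)}$. Step 1 shows that $c_j$ is the value at $ae^{i\theta_j}$ of a function depending on $a$. To conclude $c_j=c/N$, I need that function to depend only on $|a|$. I would obtain this by repeating Step 2 with the finer partition into $NM$ sectors. Two ingredients make this work: rotating by any angle $2\pi/(NM)$ permutes sectors, and $N$ consecutive small sectors exactly reassemble $D_j^N$ up to $o(1/n)$ off-diagonal errors. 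Letting the rotation angle be arbitrary, the constant for a sector of opening $\beta$ is additive in $\beta$ and independent of the phase of $a$. Hence it equals $\beta/(2\pi)$ times the total, and with $\beta=2\pi/N$ this yields the claimed formula.

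\textbf{Main obstacle.} The genuine existence of the limit $n\,s_n(M_jY_{a,\nu}M_j)$ is not automatic from the additivity argument. I would first try to obtain it via the local model, and failing that via a Weyl-type $\limsup/\liminf$ sandwich using the finer partitions. The hardest step, however, is the $o(1/n)$ bound for the adjacent off-diagonal blocks, where the kernel singularity at the shared boundary point prevents a direct use of Birman--Solomyak.
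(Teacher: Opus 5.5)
Your Step 1 is correct, and it is sharper than the paper. The paper ends by asserting that $M_jY_{a,\nu}M_j$ is unitarily equivalent to $M_1Y_{a,\nu}M_1$. Because $V_\theta^*Q_0V_\theta=e^{i\theta}Q_0$, the rotation in fact only gives $M_jY_{a,\nu}M_j\cong M_1Y_{ae^{i\theta_j},\nu}M_1$. Apart from this, your outline is the paper's: split $P_1Y_{a,\nu}P_1$ into diagonal and off-diagonal blocks, apply Ky Fan, then share the total constant among the $N$ blocks. The points you leave open are only asserted in the paper as well: the $o(1/n)$ bound for adjacent blocks and the existence of $\lim n\,s_n$ for each block.

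\textbf{The genuine gap is Step 3, and it cannot be filled.} Additivity in the opening angle plus rotation covariance only says that the total constant is spread along $\partial\mathbb D$ by a finitely additive $\mu_a$ whose rotate is $\mu_{ae^{i\theta}}$. Any angular density of the form $\rho(ae^{i\theta})$ satisfies every relation you list. Rotating the finer partition by $2\pi/(NM)$ merely repeats Step 1 on it. Phase independence is the conclusion, not a consequence, and for $a\nu\neq0$ it is false.

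To see this, note $Q_0=[M_z,\mathbb P]$ and $Q_0^*=-[M_{\bar z},\mathbb P]$. Hence $Y_{a,\nu}=[M_v,\mathbb P]$ with $v=az-\bar a\bar z+\nu\log|z|^2$, and for $\zeta\in\partial\mathbb D$ one gets $|\partial v(\zeta)|=|\nu+a\zeta|$ and $|\bar\partial v(\zeta)|=|\nu-a\zeta|$. The Hankel parts of $[M_v,\mathbb P]$ on the holomorphic and antiholomorphic halves see these two quantities separately. For instance, $(I-\mathbb P)(ve_m)=\frac{\sqrt{\alpha+1}}{m}\bigl(\nu\phi_m-\bar a\phi_{m-1}\bigr)+o(1/m)$. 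So on the holomorphic chain you get a Toeplitz operator with symbol $\nu-\bar ae^{-i\theta}$ composed with $\mathrm{diag}(1/m)$. Its Weyl asymptotics produce the angular density $|\nu-ae^{i\theta}|$, and the antiholomorphic chain gives $|\nu+ae^{i\theta}|$.

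Thus the local density at $\zeta$ is proportional to $|\nu+a\zeta|+|\nu-a\zeta|$, which is not constant. For example, with $a=\nu=1$ the symbol $v$ is holomorphic to first order at $\zeta=1$ but not at $\zeta=i$. Small sectors around these two points therefore carry different constants (density $2$ versus $2\sqrt2$ in the same units).

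So $c_j=C/N$ fails for general $a$ and $N$. Only when $a=0$, where $Y_{0,\nu}=\mathfrak R_\nu$ is rotation invariant, does your Step 1 give the equal splitting outright. For $\nu=0$ the density is uniform, but proving the equal splitting still requires a genuine localization argument rather than symmetry.
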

 \begin{proof}
 Let $a\in\mathbb C$. Using the same notations as above, we obtain $P_1=\sum_{j=1}^N M_j$. Hence 
 $$P_1Y_{a,\nu}P_1=\sum_{1\leq j\neq k\leq N} M_jY_{a,\nu}M_k+ \sum_{j=1}^N M_jY_{a,\nu}M_j.$$
 Using the fact that $D_j^N\cap D_k^N=\emptyset$ for any $j\neq k$, it is not hard to see that 
 $$s_n(M_jY_{a,\nu}M_k)=o\left(\frac{1}{n}\right)$$ for $n\to+\infty$ and again by Ky-Fan inequality, we deduce that 
 $$s_n\left(\sum_{1\leq j\neq k\leq N} M_jY_{a,\nu}M_k\right)=o\left(\frac{1}{n}\right).$$
 That gives 
 $$s_n\left(\sum_{j=1}^N M_jY_{a,\nu}M_j\right)\sim \frac{\sqrt{(\alpha+1)(\nu^2+|a|^2)}}{n}.$$
Moreover, since $M_jY_{a,\nu}M_j$ is unitarily equivalent to $M_1Y_{a,\nu}M_1$, we conclude that $$s_n(M_jY_{a,\nu}M_j)\sim\frac{\sqrt{(\alpha+1)(\nu^2+|a|^2)}}{nN}$$ for every $1\leq j\leq N$.
 \end{proof} 
    
\section{Proof of the main result}

To prove the main result, we decompose the operator $\mathscr{C}_u$ into the sum of two operators, each admitting a decomposition in terms of its singular values. To this end, we claim that for every $f\in L^2(\mathbb D,dA_\alpha),$
$$\mathscr C_u f(z)=\int_{\mathbb D}f(w)(u(z)-u(w))K_\alpha(z,w)dA_\alpha(w).$$
Moreover, since $u$ is subharmonic on a disk $\mathbb D(R)$ for some $R>1$ and harmonic on $\mathbb D(R)\smallsetminus\{0\}$, there exists  a bounded holomorphic function $U$ on $\mathbb D(R)$ such that  $u=U+\overline{U}+\nu_u\log|z|^2$ where 
$$2\nu_u=\lim_{r\to0}\frac{\sup_{z\in\mathbb D(r)}u(z)}{\log(r)}$$ is the Lelong number of $u$ at 0.  Thus, one can find a holomorphic function $F_u$ on $\mathbb D(R)\times \mathbb D(R)$ such that for every $z,w \in\mathbb D(R)\smallsetminus\{0\},$
$$\begin{array}{lcl}
    \ds u(z)-u(w)&=&\nu_u\left(\log|z|^2-\log|w|^2\right)+U(z)-U(w)+\overline{U(z)-U(w)}\\
    &=&\ds\nu_u\left(\log|z|^2-\log|w|^2\right)+U'(w)(z-w)+(z-w)^2F_u(z,w)\\
    &&\ds+\overline{U'(w)(z-w)+(z-w)^2F_u(z,w)}.
\end{array}$$
Using this identity, the operator $\mathscr C_u$ can be decomposed as $\mathscr C_u=\mathfrak{R}_{\nu_u}+L_u+L_u^*+S_u+S_u^*$ where
$$L_uf(z)=\int_{\mathbb D}f(w)U'(w)(z-w)K_\alpha(z,w)dA_\alpha(w)$$
and
$$S_uf(z)=\int_{\mathbb D}f(w)F_u(z,w)(z-w)^2K_\alpha(z,w)dA_\alpha(w).$$

For the operator $S_u$, we have the following estimate:
    \begin{lem}
    The singular values of $S_u$ satisfy
    $$s_n(S_u)=o\left(\frac{1}{n}\right)\quad as\quad n\to+\infty.$$
    \end{lem}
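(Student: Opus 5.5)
We want $s_n(S_u)=o(1/n)$, where $S_u$ has kernel $F_u(z,w)(z-w)^2K_\alpha(z,w)$ with $F_u$ holomorphic on $\mathbb D(R)\times\mathbb D(R)$. The idea is to compare $S_u$ with the operator $E$ of Lemma~\ref{l1}, whose singular values decay like $n^{-2}$. The smooth factor $F_u$ should not spoil this rate.

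\emph{Step 1: expand $F_u$.} Since $R>1$, $F_u$ has a power series expansion
$$F_u(z,w)=\sum_{k,l\geq0}a_{k,l}z^kw^l,\qquad |a_{k,l}|\leq C\rho^{-(k+l)},$$
for any fixed $1<\rho<R$, and the series converges uniformly on $\overline{\mathbb D}\times\overline{\mathbb D}$. Hence
$$S_u=\sum_{k,l}a_{k,l}\,M_{z^k}\,E\,M_{w^l},$$
because the kernel of $M_{z^k}EM_{w^l}$ is exactly $z^k(z-w)^2K_\alpha(z,w)w^l$.

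\emph{Step 2: truncate.} Fix $\varepsilon>0$ and write $S_u=T_N+\widetilde T_N$, where $T_N$ is the finite sum over $k+l\leq N$. The multiplications $M_{z^k},M_{w^l}$ have norm at most $1$ on $L^2(\mathbb D,dA_\alpha)$. Since $s_n(AEB)\leq\|A\|\,\|B\|\,s_n(E)$, we get
$$s_n(\widetilde T_N)\leq\sum_{k+l>N}|a_{k,l}|\,s_1(E)\quad\text{(operator norm)}.$$
This only controls the norm, which is not enough, so I would sharpen it using the Schatten class instead. By Lemma~\ref{l1}, $E\in\mathcal C_p$ for every $p>1/2$; in particular $E\in\mathcal C_1$. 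Therefore
$$\|S_u\|_1\leq\sum_{k,l}|a_{k,l}|\,\|E\|_1<\infty,$$
so $S_u$ is trace class.

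\emph{Step 3: conclude.} A trace-class operator has nonincreasing singular values with $\sum_n s_n<\infty$. For such a sequence,
$$\tfrac n2\,s_n\leq\sum_{n/2<m\leq n}s_m\longrightarrow0,$$
which gives $s_n(S_u)=o(1/n)$.

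The main point to verify is Step 1: the identity $S_u=\sum a_{k,l}M_{z^k}EM_{w^l}$ must hold in the trace norm. This follows from absolute convergence, $\sum|a_{k,l}|<\infty$, together with the fact that $\mathcal C_1$ is a Banach space; the series then converges in $\mathcal C_1$ and, on $L^2$ functions, agrees with the integral operator by dominated convergence. One small caveat is that the paper only asserts that $E$ is defined via the kernel $(z-w)^2K_\alpha$. Lemma~\ref{l1} gives $s_n(E)\sim c/n^2$, which is summable, so $\|E\|_1<\infty$. This is the only input needed, and no truncation argument is required beyond it.
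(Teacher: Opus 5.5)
Your argument is correct, but its key mechanism differs from the paper's. The paper also factors $S_u$ through $E$. It expands $F_u$ only in the variable $w$, writing $S_u=\mathcal Z_m+\sum_{k=0}^m\mathcal X_k$ with $\mathcal X_k=M_{F^{(k)}(\cdot,0)/k!}\,E\,M_{w^k}$ and a tail $\mathcal Z_m$ of operator norm $O(r^{-m})$. It then applies the Ky Fan--Weyl inequality $s_{(m+2)n}(S_u)\le\|\mathcal Z_m\|+\sum_{k\le m}s_n(\mathcal X_k)$. Balancing $r^{-m}$ against $s_n(E)=O(n^{-2})$ with $m$ of order $\log n$ gives the quantitative bound $s_p(S_u)=O\big((\log p)^2p^{-2}\big)$.

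You replace this truncation-and-balancing step with the ideal property $\|AEB\|_1\le\|A\|\,\|E\|_1\,\|B\|$ and the completeness of $\mathcal C_1$. Absolute summability of the double Taylor coefficients gives $S_u\in\mathcal C_1$ directly. The elementary fact that a nonincreasing summable sequence is $o(1/n)$ then finishes the proof. Your route is shorter, needs only $\sum_n s_n(E)<\infty$, and has no parameter to optimize. The paper's route gives a much sharper rate than the lemma requires.

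If you want that sharper rate within your framework, use that for $0<q\le1$ one has $\|A+B\|_q^q\le\|A\|_q^q+\|B\|_q^q$, and that $\sum_{k,l}|a_{k,l}|^q<\infty$ by the geometric Cauchy bounds. This gives $S_u\in\mathcal C_q$ for every $q>1/2$, hence $s_n(S_u)=O(n^{-2+\varepsilon})$ for every $\varepsilon>0$, which matches the paper's conclusion. The abandoned operator-norm estimate at the start of your Step~2 can simply be deleted.
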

    \begin{proof}
        Recall that $$S_uf(z)=\int_{\mathbb D}f(w)F_u(z,w)(z-w)^2K_\alpha(z,w)dA_\alpha(w).$$ 
        If we set $F_u^{(k)}(z,0)$ the $k^{th}-$derivative of the partial function $w\longmapsto F(z,w)$ at 0 for $z\in\mathbb D(R)$, then we obtain 
        $$F(z,w)=\sum_{k=0}^{+\infty}\frac{F^{(k)}(z,0)}{k!}w^k$$
        where the convergence is uniform on every compact subset of $\mathbb D(R)\times\mathbb D(R)$. The Cauchy integral formula gives that for $1<r<R$ and $z\in\mathbb D$ we have 
        $$\frac{F^{(k)}(z,0)}{k!}=\frac{1}{2i\pi}\int_{\partial\mathbb D(r)}\frac{F(z,w)}{w^{k+1}}dw.$$
        In particular, if we set $$N_r=\sup_{(\xi,w)\in\mathbb D\times \mathbb D(r)}|F(\xi,w)|,$$ 
        then we obtain $$\left|\frac{F^{(k)}(z,0)}{k!}\right|\leq \frac{N_r}{r^k}$$
        for every $z\in\mathbb D$. 
        Moreover, if $$Z_m(z,w)=(z-w)^2\sum_{k=m+1}^{+\infty}\frac{1}{k!}F^{(k)}(z,0)w^k,$$
        then it is easy to see that for every $z,w\in\overline{\mathbb D}$, we have $$|Z_m(z,w)|\leq \frac{N_r}{r^m(r-1)}.$$
        It follows that the operator $\mathcal Z_m $ defined by $$\mathcal Z_m f(z)=\int_{\mathbb D}f(w) Z_m(z,w)K_\alpha(z,w)dA_\alpha(w)$$
        is bounded on $L^2(\mathbb D,dA_\alpha)$ with norm $\|\mathcal Z_m\|\leq \frac{cN_r}{r^m(r-1)}$ for some positive constant $c>0$ independent of $m$. Again, since 
        $$\left\|M_{\frac{F^{(k)}(.,0)}{k!}}\right\|\leq \frac{N_r}{r^m}\quad\text{and}\quad\|M_{w^k}\|\leq 1,$$ then by setting $$\mathcal X_k=M_{\frac{F^{(k)}(.,0)}{k!}}\circ E\circ M_{w^k}$$
        we obtain $$S_u=\mathcal Z_m+\sum_{k=0}^m\mathcal X_k.$$
        Hence, by the previous inequalities and the properties of singular values of the sum of operators (Ky Fan - Weyl inequality), we find
        \begin{align*}
            s_{(m+2)n}(S_u)&\leq s_n(\mathcal Z_m)+\sum_{k=0}^m s_n(\mathcal X_k)\\
            &\leq \|\mathcal Z_m\|+\sum_{k=0}^m\frac{N_r}{r^k}s_n(E)\\
            &\leq \frac{cN_r}{r^m(r-1)}+\frac{rN_r}{r-1}s_n(E).
        \end{align*}
        Thanks to Lemma \ref{l1}, there exists a positive constant $\tau_0$ such that $s_n(E)\leq \frac{\tau_0}{n^2}$ for $m$ large enough. With the previous inequalities, we obtain 
        $$s_{(m+2)n}(S_u)\leq \frac{cN_r}{r^m(r-1)}+\frac{rN_r\tau_0}{(r-1)n^2}.$$
        By taking $$n=\left\lfloor \frac{2\log(m)}{\log(r)}\right\rfloor-1,$$
        we obtain $$s_{(m+2)n}(S_u)\leq \frac{rN_r}{(r-1)m^2}(rc+\tau_0).$$
        Thus, by putting $p=(m+2)n$ we find $p\sim \phi(n)$ where 
        $$\phi(x)=\frac{2}{\log(r)}x\log(x).$$
        We conclude that 
        $$s_p(S_u)\leq \frac{rN_r}{(r-1)}(rc+\tau_0)\frac{1}{(\phi^{-1}(p))^2}.$$
        In particular, for every $\varepsilon>0$, we have $$s_p(S_u)=O\left(p^{-\frac{2}{1+\varepsilon}}\right).$$
        We claim that if $W$ denotes the Lambert function, that is, the inverse of the map 
$t \mapsto t e^{t}$, then the above inverse function is given by
$$\phi^{-1}(p)=\exp\left(W\!\left(p\log(r)/2\right)\right).
$$
    \end{proof}
Now we can finish the proof of the main result. If $U$ is a holomorphic function in a neighborhood of $\overline{\mathbb D}$ as mentioned at the beginning for which $u=2\Re(U)$ then for any $\epsilon>0$, there exists an integer $N_0$ such that for any $N\geq N_0,\ 1\leq j\leq N$ and $w\in D_j^N$, we have $|U'(w)-U'(w_j)|<\epsilon$ where $w_j:=e^{i\eta_j}$ for some $\frac{2\pi (j-1)}{N}<\eta_j<\frac{2\pi j}{N}$. Using the same operators $M_j,\ 0\leq j\leq N$, we again obtain $$\mathfrak{R}_{\nu_u}+L_u+L_u^*=:T_u=\left(M_0T_uM_0+\sum_{0\leq j\neq k\leq N} M_jT_uM_k\right)+ \sum_{j=1}^N M_jT_uM_j$$ with 
 $$s_n\left(M_0T_uM_0+\sum_{0\leq j\neq k\leq N} M_jT_uM_k\right)=o\left(\frac{1}{n}\right).$$
Let us decompose $M_jT_uM_j=G_j^N+F_j^N$ with 
$$G_j^Nf(z)=\1_{D_j^N}(z)\int_{D_j^N}f(w)K_\alpha(z,w)\left((z-w)(U'(w)-U'(w_j))+ (\overline{z}-\overline{w})(\overline{U'}(w)-\overline{U'}(w_j))\right)dA_\alpha(w)$$
and 
$$F_j^Nf(z)=\1_{D_j^N}(z)\int_{D_j^N}f(w)K_\alpha(z,w)\left((z-w)U'(w_j)+ (\overline{z}-\overline{w})\overline{U'}(w_j)+\nu_u(\log|z|^2-\log|w|^2)\right)dA_\alpha(w).$$
Using the same technique used before, one can prove that 
$$s_n(G_j^N)\leq \frac{C\epsilon}{nN}$$ for some constant $C>0$ independent of $\epsilon,\ n$ and $N$. Since $G_j^N G_k^N=0$ for any $j\neq k$, we obtain 
$$\limsup_{n\to+\infty}ns_n\left(\sum_{j=1}^NG_j^N\right)\leq C\epsilon.$$
Thanks to the previous lemma, 
$$s_n(F_j^N)=s_n\left(M_jY_{U'(w_j),\nu_u}M_j\right)\underset{n\to+\infty}{\sim}\frac{\sqrt{(\alpha+1)(\nu_u^2+|U'(w_j)|^2)}}{nN}.$$
Again, using the fact that $F_j^N F_k^N=0$ for any $j\neq k$, we obtain 
$$s_n\left(\sum_{j=1}^NF_j^N\right)\underset{n\to+\infty}{\sim}\frac{\sqrt{\alpha+1}}{nN} \sum_{j=1}^N\sqrt{\nu_u^2+|U'(w_j)|^2}.$$
 Taking $N$ to infinity, we deduce that 
$$s_n(T_u)\underset{n\to+\infty}{\sim}\frac{\sqrt{\alpha+1}}{2\pi n}\int_0^{2\pi} \sqrt{\nu_u^2+|U'(e^{i\theta})|^2}d\theta.$$
That is,
$$s_n(\mathfrak{R}_{\nu_u}+L_u+L_u^*)\underset{n\to+\infty}{\sim}\frac{\sqrt{\alpha+1}}{2\pi n}\int_{\partial \mathbb D} \sqrt{\nu_u^2+|U'(z)|^2}\;|dz|.$$
 by the first step, $$s_n(S_u+S_u^*)=o\left(\frac{1}{n}\right).$$ Thus
$$s_n(\mathscr C_u)\underset{n\to+\infty}{\sim}\frac{\sqrt{\alpha+1}}{2\pi n}\int_{\partial \mathbb D} \sqrt{\nu_u^2+|U'(z)|^2}\;|dz|.$$ This completes the proof of the main result. 
\begin{cor}
    For any subharmonic function $u$ in a neighborhood $\mathbb D(R)$ of $\overline{\mathbb D}$ that is harmonic $\mathbb D(R)\smallsetminus\{0\}$, we have 
    \begin{enumerate}
        \item The operators $M_u$ and $\mathbb P_\alpha$ commute on $L^2(\mathbb D,dA_\alpha)$  if and only if $u$ is constant on $\mathbb D(R)$.
        \item The commutator $\mathscr C_u$ is in the von Newman-Schatten class $\mathcal C_p$ for any $p>1$.
    \end{enumerate}
\end{cor}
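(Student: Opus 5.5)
For part (2), I would read off the asymptotics from the main Theorem. It gives
$$s_n(\mathscr C_u)\sim \frac{\kappa_u}{n},\qquad \kappa_u:=\frac{\sqrt{\alpha+1}}{2\pi}\int_{\partial\mathbb D}\sqrt{\nu_u^2+|U'(z)|^2}\,|dz|<\infty .$$
Here $\kappa_u$ is finite because $U'$ is holomorphic on $\mathbb D(R)$, $R>1$, and hence bounded on $\partial\mathbb D$. Consequently there is a constant $C$ with $s_n(\mathscr C_u)\le C/n$ for all $n\ge1$. The terms with small $n$ are finite because $\mathscr C_u$ is bounded, and in any case they are finitely many. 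For $p>1$ we then get $\sum_n s_n(\mathscr C_u)^p\le C^p\sum_n n^{-p}<\infty$, so $\mathscr C_u\in\mathcal C_p$. Since $s_n\ge \kappa_u/(2n)$ eventually, this also shows that $\mathscr C_u\notin\mathcal C_1$ whenever $\kappa_u>0$, which is the sharp statement.

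For part (1), one direction is immediate. If $u\equiv c$ is constant, then $M_u=cI$ commutes with $\mathbb P_\alpha$.

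For the converse I would assume $\mathscr C_u=0$, so that all $s_n(\mathscr C_u)=0$. The Theorem then forces $\kappa_u=0$, i.e. $\int_{\partial\mathbb D}\sqrt{\nu_u^2+|U'|^2}\,|dz|=0$. The integrand is continuous and nonnegative, so $\nu_u=0$ and $U'\equiv0$ on $\partial\mathbb D$. By the identity principle, $U'\equiv0$ on the connected set $\mathbb D(R)$, so $U$ is constant. Combined with $\nu_u=0$, this gives $u=U+\overline U$ constant on $\mathbb D(R)\smallsetminus\{0\}$. Since $u$ is subharmonic, its value at $0$ is determined by upper semicontinuity and the sub-mean value property, namely $u(0)=\limsup_{z\to0}u(z)$, so $u$ is constant on all of $\mathbb D(R)$.

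The only point needing care is that the asymptotic relation ``$s_n\sim \kappa_u/n$'' must be read correctly when $\kappa_u=0$: it should mean $ns_n\to0$, which is consistent with $\mathscr C_u=0$. So deducing $\kappa_u=0$ from $\mathscr C_u=0$ is legitimate, because if $\kappa_u>0$ then $s_n>0$ for large $n$.

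As a cross-check that does not rely on the asymptotics, I would also verify the converse directly. If $M_u$ preserves $\mathscr H^2_\alpha$, then $u=M_u1\in\mathscr H^2_\alpha$ is harmonic in $\mathbb D$, which kills the $\log$ term, so $\nu_u=0$. Moreover $u\cdot z$ must also be harmonic, and $\Delta(zu)=4\,\partial_{\bar z}\partial_z(zu)=4\,\partial_{\bar z}u=4\overline{U'}$, which forces $U'=0$.
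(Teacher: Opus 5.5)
Your proof is correct and follows the route the paper intends: the paper gives no separate proof of the corollary and presents it as an immediate consequence of the main Theorem, with (2) read off from $s_n(\mathscr C_u)=O(1/n)$ and (1) from the fact that the asymptotic constant vanishes only when $\nu_u=0$ and $U'\equiv 0$. Your direct check of (1), which uses only $\mathscr C_u1=\mathscr C_uz=0$ and $\Delta(zu)=4\overline{U'}$, is a useful addition because it does not depend on the exact constant in the Theorem, and that constant should not be relied on: for $u=2\Re z$ one has $\mathscr C_u=H-H^*$ with $H=(I-\mathbb P_\alpha)M_u\mathbb P_\alpha$, and a direct computation shows each $c_k$ ($k\ge 1$) is a singular value of multiplicity four, so $n\,s_n(\mathscr C_u)\to 4\sqrt{\alpha+1}$ rather than the stated $\sqrt{\alpha+1}$ (for (2) only the order $O(1/n)$ matters, so that part is unaffected).
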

We conclude the paper by comparing the asymptotic behavior of the singular values of the commutator $\mathscr{C}_u$ with those of the Cauchy transform
$$
\mathcal C f(z)=-\int_{\mathbb D}\frac{f(w)}{w-z}\,dA_\alpha(w),
$$
which was investigated in a more general setting in \cite{CGS}. In contrast with the classical case $\alpha=0$, where the two operators have the same asymptotic behavior, we prove here, up to a multiplicative constant, that
$$
s_n(\mathscr{C}_u)\sim \frac{1}{n},
$$
whereas it was shown in \cite{CGS} that
$$
s_n(\mathcal C)\sim \frac{1}{n^{\alpha+1}}.
$$
\section*{Conflict of interest statement}
The authors declare that there is no conflict of interest.

 \end{document}